\newtheorem{theorem}{Theorem}[section]
\theoremstyle{definition}
\newtheorem{definition}[theorem]{Definition}
\newtheorem{example}[theorem]{Example}
\theoremstyle{remark}
\newtheorem{remark}[theorem]{Remark}
\numberwithin{equation}{section}
\begin{document}

\title[Product spaces generated by bilinear maps]
{Product spaces generated by bilinear maps and duality}
 
\author[E.A. S\'{a}nchez P\'{e}rez]{
 E.A. S\'{a}nchez P\'{e}rez}

\address{ 
\newline
Instituto Universitario de Matem\'atica Pura y Aplicada
\newline
Universitat Polit\`ecnica de Val\`encia
\newline
Camino de Vera s/n, 46022
Valencia, Spain.}
\email{\textcolor[rgb]{0.00,0.00,0.84}{easancpe@mat.upv.es}}

\subjclass[2010]{Primary 46A32, Secondary 46E30, 47A30, 46B10.}

\keywords{Banach space, product, multiplication operator, duality, Banach function space, Hadamard product, Lipschitz map, integration, vector measure.}

\thanks{Support of the Ministerio de Econom\'{\i}a y Competitividad (Spain) MTM2012-36740-C02-02.}

\begin{abstract}
We analyze a definition of product of Banach spaces that is naturally associated by duality with an abstract notion of space of multiplication operators. This dual relation allows to understand several constructions coming from different fields of the functional analysis, that can be seen as instances of the abstract  one when a particular product is considered. Some relevant examples and applications are shown.
\end{abstract}

\maketitle

\section{Introduction and notation}

One of the main tools in the theory of operator ideals on Banach spaces is the so called
representation formula for maximal operator ideals. This result asserts that if $E,F$ are Banach 
spaces, $F^*$ is the dual of $F$ and $\mathcal U$ is an operator ideal, we can find 
a reasonable tensor norm 
$\alpha$ such that
$(E \otimes_\alpha F)^*=\mathcal{U}(E,F^*)$
(see for example \cite[Ch.II,\S 17]{deflo}). For the
particular case of $\alpha=\pi$ ---the projective tensor norm---
we have the well-known representation of the ideal of linear and
continuous operators,
$$
(E \otimes_\pi F)^*=\mathcal{L}(E,F^*).
$$

 Consider now the following case, that comes from a different setting. Take a couple of Banach function spaces $X(\mu)$ and $Y(\mu)$ over a finite measure $\mu$, and consider the  K\"othe dual $Y(\mu)'$ of $Y(\mu)$ ---the elements of the dual space that can be represented as integrals---. Assume also that the so called product space $X(\mu) \pi Y(\mu)$ is again a Banach function space; the saturation requirements on the product  for this to hold are well-known.  Then we have that
$$
(X(\mu) \pi Y(\mu))'= X(\mu)^{Y(\mu)'},
$$
 where $X(\mu)^{Y(\mu)'}$ is the space of multiplication operators from $X(\mu)$ to the K\"othe dual $Y(\mu)'$ (see Proposition 2.2 in \cite{delsan}; see also \cite{caldelsan,kolesmal,SPfactomulti,suktom,schepmultip}).

Other example coming in this case from the harmonic analysis is given by the Hadamard product; this case has been investigated in \cite{blaspav} and will be analyzed in Section 3.
The duality formulas that are known for the $\Sigma$ and $\Delta$ constructions  coming from the interpolation theory for Banach spaces can be understood in a sense from the same point of view (see \cite[Section 3]{massan} and the references therein).
These examples  suggest that this general formula would make sense using an abstract definition of product space and taking into account the associated duality relation and the corresponding space of linear and continuous operators that fit well with the rest of the elements. There are much more examples that can be adapted to this general scheme and we do not treat in this paper: for instance, the convolution product on the class of the $L^p$-spaces, the pointwise product on $C(K)$ spaces or the composition product in an abstract Banach algebra. All of them may be adapted to make sense in our general framework, providing new representations for the corresponding ``dual" spaces. 

If $E$ and $F$ are Banach spaces,  the projective tensor product $E \otimes_\pi F$ is defined to be the linear space of all finite combination of single tensors $x \otimes y$ together with the norm
$$
\pi(z):= \inf \sum_{i=1}^n \|x_i \| \| y_i \|,
$$
where the infimum is computed over all finite representations of
$z$ as $\sum_{i=1}^n x_i \otimes y_i$. If $c:E \times F \to G$ is a Banach valued bilinear map, the range of its linearization $\hat{c}$ from $E \otimes_\pi F$ to $G$ can be used to define a sort of product structure that can be identified with a subspace of $G$ with a particular norm. Together with some subspaces of operators that we call generalized dual spaces, it allows to establish the duality formula that is the objective of this work. It must be noted that our aim is to understand a lot of classical results from a unified point of view and not to prove a genuine ``new" result; the proof of the only theorem of the paper is easy and in a sense standard. How to write results coming from several settings that seem to be completely different  as  consequences of a unified principle is what we want to show.

In this paper we consider in a sense linear and continuous operators as generalized multiplication operators. This is the reason that we use the usual notation for multiplication operators, that in a sense is the opposite to the  one for maps: if $Y$ and $X$ are Banach spaces we will write $Y^X$ for the space of linear and continuous operators from $Y$ to $X$, that is usually denoted by $\mathcal{L}(Y,X)$. This classical notation will be used too, depending on the context.

\section{Generalized duality on Banach spaces and the product duality  formula}

In this section we introduce the construction that leads to what we call a product of two Banach spaces and the associated product duality formula  in terms of bilinear maps and ``dual" subspaces of operators.

 Let us fix some notation. Let  $c:E \times F \to G$ be  a bounded bilinear map and let $\mathcal{V} \subseteq \mathcal{L}(G,X)$.

\begin{itemize}

\vspace{2mm}

\item
 We write $\hat{c}$ for the linearization of
$c$, i.e.
$\hat{c}: E {\otimes} F \to G$, $\hat{c}(x \otimes y):=c(x,y),$ $x \in E$, $y \in F.$

\vspace{4mm}

\item We  write $c_L$ ---the left linearization of $c$---
for the map $c_L:E \to \mathcal{L}(F,G)$ given by
$c_L(x)(y):=c(x,y)$, $x \in E$, $y \in F$.

\vspace{4mm}

\item We  define
$c_R$ ---the right linearization of $c$--- as the map $c_R:F \to \mathcal{L}(E,G)$ given by
$c_R(y)(x):=c(x,y)$, $x \in E$, $y \in F$.

\end{itemize}

\vspace{4mm}

Let us introduce now our basic product structure.

\begin{definition}
Consider a pair of normed spaces $E$ and $F$, and assume that
there is a continuous Banach space valued bilinear map $c:E \times
F \to G$. Define a seminorm on $E \otimes F$ by the formula
$$
\pi_{c}(z):= \inf \big\{ \sum_{i=1}^n \|x_i\| \|y_i\|:
\hat{c}(z)=\hat{c}(\sum_{i=1}^n x_i \otimes y_i)= \sum_{i=1}^n {c}(x_i,y_i) \big\}, \quad z \in E \otimes F,
$$
where the infimum is defined for all simple tensors $\sum_{i=1}^n x_i \otimes y_i$ such that $\hat{c}(z)=\hat{c}(\sum_{i=1}^n x_i \otimes y_i).$
Note that it is a \textit{norm} if we construct a quotient by identifying the equivalence classes  of the tensor product $E \otimes F$ with its range by $\hat{c}$ in $G$, i.e. with the subspace of $\hat{c}(E \otimes_{\pi} F) \subseteq G.$
We define the \textit{product space} $E {\otimes}_{\pi_{c}} F$ as  the normed space $(\hat{c}(E \otimes F), \pi_c )$.
\end{definition}

The following scheme may help to visualize the construction above: the product space is a sort of intermediate space in the commutative diagram
$$
\xymatrix{ E \times F \ar[rr]^{c} \ar@{->}[dr]_{\otimes} & & E
{\otimes}_{\pi_{c}} F  \hookrightarrow  G. \\
&  E {\otimes}_{\pi} F \ar[ur]_{\hat{c}} & }
$$
We will use the abuse of notation of considering $\hat{c}$ having values both in $G$ and in $ E
{\otimes}_{\pi_{c}} F$ depending on the context.
\vspace{3mm}

\begin{remark} \label{defpro}
Let  $\mathcal{V}$ be a Banach subspace  of
the space of linear and continuous operators $\mathcal{L}(G,X)$.
Fix $T \in \mathcal{V}$. It defines a diagram as
$$
E \otimes_\pi F \to^{\hat{c}} E {\otimes}_{\pi_{c}} F \hookrightarrow
G \to^T X.
$$
Let us define an operator $\varphi_T:E {\otimes}_{\pi_{c}} F \to X$ by $\varphi_T( \hat{c}(x \otimes y)):= (T \circ c)(x,y),$ $x \in E,$ $y \in F$, and impossing its linearity. It  actually gives  a well-defined continuous operator
from $E {\otimes}_{\pi_{c}} F$ to $X$. To see this, note that for each $x \in E$ and $y \in F$,
$$
\| \varphi_T( \hat{c}(x \otimes y))\| \le \|T\| \|c(x,y)\|,
$$
and so for each finite sum $z=\sum_{i=1}^n x_i \otimes y_i$,
$$
\| \varphi_T( \hat{c}(z))\| = \| \varphi_T( \sum_{i=1}^n c(x_i,y_i))\| \le
\|T(\sum_{i=1}^n c(x_i,y_i))\| \le
\|T\| \|c\| \pi (z).
$$
Since if $\sum_{i=1}^{n_1} c(x^1_i,y^1_i)= \sum_{i=1}^{n_2} c(x^2_i,y^2_i)$ we have that
$$
\varphi_T( \hat{c}( \sum_{i=1}^{n_1} x^1_i \otimes y^1_i))= T(\sum_{i=1}^{n_1} c(x^1_i,y^1_i))=
T(\sum_{i=1}^{n_2} c(x^2_i,y^2_i)) =
\varphi_T(\hat{c} ( \sum_{i=1}^{n_2} x^2_i \otimes y^2_i)),
$$
we also obtain that
$$
\| \varphi_T( \hat{c}(z))\|  \le
\|T\| \|c\| \pi_c (z).
$$
\end{remark}

\vspace{3mm}

This observation is on the basis of the next definition.

\begin{definition}
With the notation used above, consider a closed subspace
$\mathcal{V}$ of operators from $G$ to $X$.  We define the
\textit{(generalized) dual space} $(E {\otimes}_{\pi_c} F)^\mathcal{V}$ of $E {\otimes}_{\pi_c} F$
as
$$
(E {\otimes}_{\pi_c} F)^\mathcal{V} := \{\varphi_T:E {\otimes}_{\pi_c} F \to X \, | \,T \in
 \mathcal{V} \} \subseteq \big(
E {\otimes}_{\pi_c} F \big)^X
$$
endowed with the natural norm $\|\varphi_T\|:=\sup_{x \in B_E, y \in
B_F} \|T(c(x,y))\|_X$.
\end{definition}

\begin{definition}
With the notation used above, consider a closed subspace
$\mathcal{V}$ of operators from $G$ to $X$. For each $T \in
\mathcal{V}$ define the \textit{(generalized pointwise) multiplication
operator} from $E$ to $F^X$ by
$$
S_T(x)(y):=T(c_L(x)(y))= T(c(x,y)) \in X.
$$
We will call them simply multiplication operators for short.
 We will write $E^{\mathcal{V} \odot c_L}$ for the normed space of (generalized pointwise)
multiplication operators
$$
  E^{\mathcal{V} \odot c_L} :=\big\{ S_T:E \to \mathcal{L}(F,X) \,| \, T \in \mathcal{V}
\big\} \subseteq   E^{(F^X)}
$$
endowed with the operator norm of $ E^{\mathcal{L}(F,X)}$. Clearly, $\|S_T\| \le \|T\| \|c\|$ for all $T \in \mathcal{V}.$
\end{definition}
Note that the same kind of definition can be done by changing $c_L$ by $c_R$. Also the dual version of the following result ---the product duality formula with $c_R$ instead of $c_L$---, may be obtained.

\begin{theorem} \label{prop1}
Let $E,F,G$ and $X$ be Banach spaces, and suppose that a bilinear map
$c: E \times F \to G$ and a class of operators  $\mathcal V$ from
$G$ to $X$ are given. Then
$$
\big( \big( E{\otimes}_{\pi_c} F \big)^{X} \supseteq \big) \,\,\,\,\,\,\,\,
(E {\otimes}_{\pi_c} F)^\mathcal{V} =E^{\mathcal{V} \odot c_L}  \,\,\,\,\,\,\,\,
\big( \subseteq E^{(F^X)} \big).
$$
isometrically.
\end{theorem}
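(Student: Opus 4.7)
The plan is to exhibit the natural bijection $\varphi_T \longleftrightarrow S_T$ induced by the common parameter $T \in \mathcal{V}$: both constructions are literally built from the same trilinear datum $T\circ c:E\times F\to X$, packaged in two ways. One way views $T\circ c$ as a linear map on the product space $E\otimes_{\pi_c} F$ (the operator $\varphi_T$), the other currying $x \mapsto (y \mapsto T(c(x,y)))$ (the operator $S_T$). So the candidate identification is $\Phi:(E\otimes_{\pi_c}F)^{\mathcal V}\to E^{\mathcal V\odot c_L}$, $\Phi(\varphi_T):=S_T$.

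First I would verify that $\Phi$ is a well-defined bijection. Both sides are indexed by $T\in\mathcal V$, so surjectivity is automatic. For well-definedness and injectivity, note that $\varphi_T=\varphi_{T'}$ on $E\otimes_{\pi_c}F$ is equivalent to $T(c(x,y))=T'(c(x,y))$ for every $x\in E$, $y\in F$ (evaluate on elementary images $\hat c(x\otimes y)$ and use linearity), and this in turn is precisely $S_T=S_{T'}$ in $E^{(F^X)}$. Hence $\Phi$ is a well-defined linear bijection.

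Next I would match the norms. By the definition of the norm on $(E\otimes_{\pi_c}F)^{\mathcal V}$,
$$
\|\varphi_T\|=\sup_{x\in B_E,\,y\in B_F}\|T(c(x,y))\|_X,
$$
while the operator norm on $E^{\mathcal{L}(F,X)}$ applied to $S_T$ unfolds as
$$
\|S_T\|=\sup_{x\in B_E}\|S_T(x)\|_{\mathcal{L}(F,X)}=\sup_{x\in B_E}\sup_{y\in B_F}\|T(c(x,y))\|_X,
$$
so the two expressions coincide, giving the isometry. For the parenthetical ambient inclusions, the inclusion $E^{\mathcal V\odot c_L}\subseteq E^{(F^X)}$ is isometric by definition, and the inclusion $(E\otimes_{\pi_c}F)^{\mathcal V}\subseteq (E\otimes_{\pi_c}F)^X$ follows from Remark \ref{defpro}: the estimate $\|\varphi_T(\hat c(z))\|\le \|T\|\,\|c\|\,\pi_c(z)$ shown there, combined with the sharper bound obtained by passing to an $\varepsilon$-optimal tensor representation (so that $\|\varphi_T(\hat c(z))\|\le\pi_c(z)\cdot\sup_{B_E\times B_F}\|T\circ c\|$) and evaluating on $\hat c(x\otimes y)$ with $\|x\|,\|y\|\le 1$ (whose $\pi_c$-norm is at most $1$), identifies the operator norm of $\varphi_T$ with the double supremum above.

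There is no serious obstacle here; everything is bookkeeping directly from the definitions, in keeping with the author's own note that the result is easy and essentially formal. The conceptual content is the observation that the product norm $\pi_c$ was tailored exactly so that the ``bilinear'' description of $T\circ c$ and its ``linearized'' counterpart on $E\otimes_{\pi_c}F$ have the same norm, which is why only one need be checked on elementary tensors.
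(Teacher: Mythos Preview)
Your proposal is correct and follows essentially the same route as the paper. The only organizational difference is that you invoke the \emph{defined} norm $\|\varphi_T\|=\sup_{x\in B_E,\,y\in B_F}\|T(c(x,y))\|_X$ directly, making the equality $\|\varphi_T\|=\|S_T\|$ a tautology, and then relegate the $\varepsilon$-optimal-representation argument to the verification of the ambient inclusion $(E\otimes_{\pi_c}F)^{\mathcal V}\subseteq (E\otimes_{\pi_c}F)^X$; the paper instead runs that same $\varepsilon$-argument as the main proof that the \emph{operator} norm of $\varphi_T$ on $(E\otimes_{\pi_c}F,\pi_c)$ equals $\|S_T\|$, which simultaneously handles the inclusion.
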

\begin{proof}
Recall that each element of $(E {\otimes}_{\pi_c} F)^\mathcal{V}$ is given
by the following expression; if $x \in E,$ $y \in F$ and $T \in
\mathcal{V}$,
$$
\varphi_T( \hat{c}(\sum_{i=1}^n x_i \otimes y_i)):= \sum_{i=1}^n (T
\circ c)(x_i,y_i).
$$

Clearly, such a continuous operator can also be understood as a
continuous linear map $S_T: E \to \mathcal{L}(F,X)$ that is
defined by
$$
S_T(x)(y):=(T \circ c)(x,y) \in X
$$
for $x \in E$ and $y \in F$. It is continuous since
$$
\|S_T(x)(y)\|= \|T(c(x,y))\|  \le \|T\| \|c\|\|x\|\|y\|.
$$
Conversely, the definition of an element $S_T$ of $E^{\mathcal{V} \odot c_L}$ allows to define the corresponding $\varphi_T$, in this case by the formula
$$
\varphi_T( \hat{c}(\sum_{i=1}^n x_i \otimes y_i)):= \sum_{i=1}^n S_T(x_i)(y_i) = T(\sum_{i=1}^n c(x_i,y_i)),
$$
where $\hat{c}(\sum_{i=1}^n x_i \otimes y_i) \in E {\otimes}_{\pi_c} F.$ Then
$$
\| \varphi_T(\hat{c}(\sum_{i=1}^n x_i \otimes y_i) ) \| \le \|T\| \|c\| \big( \sum_{i=1}^n \|x_i\| \|y_i\| \big).
$$
A computation as in Remark \ref{defpro} shows that
in fact
$$
\| \varphi_T(\hat{c}(\sum_{i=1}^n x_i \otimes y_i) ) \| \le \|T\| \|c\| \pi_c (\sum_{i=1}^n x_i \otimes y_i) .
$$
Let us show  the equality of the norms of $\varphi_T$ and $S_T$ for a fixed $T \in \nu$. Consider first an element $z \in E{\otimes}_{\pi_c} F$ such that $\pi_c(z) \le 1$. Fix $\varepsilon >0$ and take a tensor $\sum_{i=1}^n x_i \otimes y_i$ such that $\hat{c}(\sum_{i=1}^n x_i \otimes y_i)=z$ and $\sum_{i=1}^n \|x_i\| \|y_i\| < 1 + \varepsilon$. Then
$$
\|\varphi_T(z)\| = \|
\varphi_T(\hat{c}(\sum_{i=1}^n x_i \otimes y_i)) \| =  \| T(\sum_{i=1}^n c(x_i,y_i)) \|
$$
$$
= \| \sum_{i=1}^n S_T(x_i)(y_i) \| \le \|S_T\| \Big( \sum_{i=1}^n \|x_i\| \|y_i\| \Big) \le \|S_T\| (1+ \varepsilon).
$$
Thus, $ \|\varphi_T \| \le \|S_T\|.$ For the converse, take $\varepsilon>0$ and  norm one elements $x \in E$ and $y \in F$ such that $\|S_T\| \le \|S_T(x)(y)\| + \varepsilon.$ Then
$$
\|S_T\| \le \|T(c(x,y))\| + \varepsilon \le \| \varphi_T( \hat{c}(x \otimes y))\| + \varepsilon
$$
$$
\le \|\varphi_T\| \pi_c(\hat{c}(x \otimes y)) + \varepsilon \le \|\varphi_T\| \|x \| \| y\| + \varepsilon \le \| \varphi_T\| + \varepsilon.
$$
This proves that $\|S_T\| \le \|\varphi_T\|,$ and so $\|S_T\| = \|\varphi_T\|.$

\end{proof}

\section{Examples and applications}

\subsection{Some direct examples}
Let us show first some classical easy examples in which  the product duality formula appears in a natural way.

\begin{example}
Let us present an elementary  example that shows how the topological dual
$F^*$ space of a Banach space $F$ can be interpreted in our
setting. Let $E=\mathbb R$ and $F$ a Banach space. Let $c: \mathbb
R \times F \to G=F$ given by the product $c(r,y)=ry$ and consider
$X= \mathbb R$. Let $\mathcal V= F^{\mathbb{R}}=F^*$.
Then we clearly have that $\mathbb{R} {\otimes}_{\pi_c} F= F$ and
$$
F^*= (\mathbb R {\otimes}_{\pi_c} F)^\mathcal{V}  \subseteq (\mathbb{R} {\otimes}_{\pi_c} F)^{\mathbb{R}}= F^*.
$$
On the other hand, ${\mathbb R}^{F^*}=F^*$, and so we
have the desired equality
$$
(\mathbb R {\otimes}_{\pi_c} F)^\mathcal{V}  =
F^*=(\mathbb{R} {\otimes}_{\pi_c} F)^{\mathbb{R}}=
\mathbb{R}^{F^*}={\mathbb R}^{\mathcal{V} \odot c_L}.
$$
\end{example}

\begin{example} \label{ex4.2}
Let us see that the duality between the dual space of the
projective tensor product $E {\otimes}_\pi F$ and the space of
operators from $E$ to $F^*$ is a particular case of this formula.
Consider the bilinear map $c=\otimes: E \times F \to E \otimes_\pi
F$ and $X= \mathbb R$. Consider the space of all linear and
continuous functionals $\mathcal V= \mathcal{L}(E \otimes_\pi F,
\mathbb R)=(E \otimes_\pi F)^*.$ Then
$$
(E {\otimes}_{\pi_c} F)^\mathcal{V}  =
\big( E {\otimes}_{\pi_c} F \big)^*= E^{\mathcal{L}(F,X)}=E^{F^*}= E^{\nu \odot  c_L}.
$$
\end{example}

\begin{example} \label{exBFS}
Let us show an example involving $p$-th powers of Banach function spaces (see Section 2.2 in \cite{libro}, see also \cite[p.51]{lint}). We need to introduce first some basic notions on Banach function spaces. Let $(\Omega,\Sigma,\mu)$ be a complete finite measure space.
We follow the definition of Banach function space given in  \cite[Def.1.b.17,
p.28]{lint}. A real Banach space
$X(\mu)$ of (equivalence classes of) $\mu$-measurable functions is a Banach function space  over $\mu$ ---also called a K\"othe
function space--- if $X(\mu) \subset L^1( \mu)$ and
contains all the simple functions and, if
 $\Vert\cdot\Vert_{X(\mu)}$ is the norm of the space, $g\in X(\mu)$ and $f$ is a measurable function such that  $|f|\le|g|$ $\mu$--a.e., then $f\in
X(\mu)$ and $\Vert f\Vert_{X(\mu)}\le\Vert g\Vert_{X(\mu)}$. The relations
 $L^\infty(\mu)\subset X(\mu)\subset L^1(\mu)$ with continuous inclusions always hold. A
Banach function space\ $X$ is \emph{order continuous} if
decreasing positive  sequences  converging  $\mu$-a.e. to $0$ converge also in the
norm. If $X(\mu)$ and $Y(\mu)$ are Banach function spaces and $X(\mu)
\subseteq Y(\mu)$, we define the \textit{space of multiplication
operators} $X(\mu)^{Y(\mu)}$ as the space of (classes of) measurable functions
defining operators from $X(\mu)$ to $Y(\mu)$ by pointwise multiplication. The operator norm is considered for this space; then $X(\mu)^{Y(\mu)}$  is also a Banach function space
over $\mu$. The {\it K\"othe dual} of $X(\mu)$ is defined by the real functionals obtained by integrating the evaluation of the elements of  $X(\mu)^{L^1(\mu)}$; i.e.
each  $g\in X(\mu)'$ can be identified with a continuous
functional on $X(\mu)$ via the integral of the multiplication
operator $f\in X(\mu)\mapsto fg\in L^1(\mu)$. If $X(\mu)$ is order continuous, then $X(\mu)'=X(\mu)^*$ isometrically. 

Let us define now the notion of $p$-th power of a Banach function space $X(\mu)$. Let $0<p\le 1$.
 The $p$-th power  of $X(\mu)$  is
defined as the set of functions
$$
X_{[p]}:=\{f \in L^0(\mu): |f|^{1/p} \in X(\mu)\}
$$
that is a Banach function space over $\mu$  with
the norm $\|f\|_{X(\mu)_{[p]}}:= \| |f|^{1/p} \|_{X(\mu)}^p$, $f \in
X(\mu)_{[p]}$. For example, $L^1[0,1]_{[1/2]} = L^2[0,1].$

 Let $X(\mu)$ be  an order continuous Banach function space over a finite measure $\mu$. 
Consider the case
$E=X(\mu)_{[1/p]}$, $F=X(\mu)_{[1/p']}$, $G=X(\mu)$ and $X=\mathbb R$. Let $c:X(\mu)_{[1/p]}
\times X(\mu)_{[1/p']} \to X(\mu)$ given by the pointwise product $c(f,g)=f g.$ Then it is well-known that
$X(\mu)$ can be written as the (pointwise) product space $X(\mu)_{[1/p]}
\cdot X(\mu)_{[1/p']}$ endowed with a norm given by the expression
$$
\|f\| := \inf \,\,\, \|g\|_{X(\mu)_{[1/p]}} \cdot \|h\|_{X(\mu)_{[1/p']}} = \inf \,\,\, \| |g|^p\|_{X(\mu)}^{1/p} \cdot \| |h|^{p'}\|_{X(\mu)}^{1/p'},
$$
where the infimum is computed over all decompositions of $|f|$ as  $|f|=|g| |h|$, $ g \in X(\mu)_{[1/p]}$, $h \in X(\mu)_{[1/p']}$ (to see this, use for example Proposition 1.d.2 in \cite{lint}).

 Take $\mathcal V= X(\mu)'$, and consider the space of multiplication operators $\mathcal{M}(X(\mu)_{[1/p]},{ (X(\mu)_{[1/p']})'}) $ defined by all the functions $S_j$ from $X(\mu)_{[1/p]}$ to $ (X(\mu)_{[1/p']})'$ that are given by the expression
$$
S_j(g)(f):= \int j g h \, d\mu, \quad h \in X(\mu)_{[1/p']},
$$
for each $j \in X(\mu)'$. The operator norm is considered for this space. A direct identification shows that this is a description of the space $(X(\mu)_{[1/p]})^{\mathcal{V} \odot c_L}$ for this particular case.

An application of the product duality formula of Theorem \ref{prop1} gives that
$$
X(\mu)' = (X(\mu)_{[1/p]}
\cdot X(\mu)_{[1/p']})'= ( X(\mu)_{[1/p]} {\otimes}_{\pi_c} X(\mu)_{[1/p']})^\mathcal{V}
$$
$$
=(X(\mu)_{[1/p]})^{\mathcal{V} \odot c_L} = \mathcal{M}(X(\mu)_{[1/p]},(X(\mu)_{[1/p']})') \subseteq X(\mu)_{[1/p]})^{(X(\mu)_{[1/p']})'}.
$$
Therefore, we have found that for each $1 \le p$, the space of multiplication operators defined by elements of $X(\mu)'$, $\mathcal{M}(X(\mu)_{[1/p]},{ (X(\mu)_{[1/p']})'}) $ coincides in fact with $X(\mu)'$ isometrically.

\end{example}

\begin{example} \label{exnuv}
Other example that provides some information on duality in vector valued function spaces is the one given by 
the tensor product bilinear map $c=\otimes$ from $L^p(\mu) \times E$ on $L^p(\mu,E)$, ---the space of
 Bochner $p$-integrable functions--- for $1 < p< \infty$, and being $\mu$ a finite measure. Suppose also that $E^*$ has the Radon-Nikod\'ym property with respect to $\mu$, and take $\nu$ as the dual of the space $L^p(\mu,E)$, that is given by $L^{p'}(\mu,E^*)$; that is, $X= \mathbb R$. We have that
$$
L^p(\mu) {\otimes}_{\pi_c} E = L^p(\mu) {\otimes}_{\pi} E.
$$
On the other hand, 
$$
(L^p(\mu))^{\nu \odot  c_L} =  \Big\{ S_g:L^p(\mu) \to E^*: \, \textit{there is} \,g \in L^{p'}(\mu,E^*) \, 
$$
$$ 
\textit{such that} \, S_g(f)(x)= \int f \langle x, g(w) \rangle \, d \mu(w), \, f \in L^p(\mu), \, x \in E \Big\}.
$$
By the product duality formula, we have that 
$$
\Big( L^p(\mu) {\otimes}_{\pi} E \Big)^* \supseteq  \Big( L^p(\mu) {\otimes}_{\pi} E \Big)^\mathcal{V}  = (L^p(\mu))^{\nu \odot  c_L} \subseteq (L^p(\mu))^{E^*}.
$$
 This equality means that the dual of the Bochner space $L^p(\mu,E)$ generates a subspace of linear and continuous functions $L^p(\mu) \to E^*$ that is isometrically isomorphic to a space included in the dual of the projective tensor product of $L^p(\mu)$ and $E$. Moreover, for a function $g \in L^{p'}(\mu,E^*)$,
$$
\|\varphi_g\|=\|S_g\|_{(L^{p})^{E^*}}= \sup_{f \in B_{L^p(\mu)}, \, x \in B_E} \Big| \int f \langle x, g(w) \rangle \, d\mu(w) \Big|=
\sup_{x \in B_E} \| \langle x, g \rangle \|_{L^{p'}(\mu)},
$$
that is the $p'$-Pettis norm for the function $g$.

\end{example}

\subsection{Multiplication operators on Banach function spaces}
Let us show now other  application of our theorem that  proves a well-known formula of the theory of Banach lattices of functions regarding the authentic space of multiplication operators. It can be essentially found in \cite{caldelsan,delsan}; the same result with different notation is given in \cite{schepmultip}, and more examples of the duality formula in the case of multiplication operators can be found in \cite{SPfactomulti}.
Consider a pair of saturated Banach function spaces $X(\mu)$ and
$Y(\mu)$ over a finite measure $\mu$ (in the sense that has been explained in Example \ref{exBFS})
such that $X(\mu) \subseteq Y(\mu)'$, where $Y(\mu)'$ denotes the K\"othe dual of $Y(\mu)$. Then the
$\pi$-product space $X(\mu) \pi Y(\mu)$ can be defined as in \cite{delsan} and is again a
Banach function space over $\mu$ with $X(\mu) \pi Y(\mu) \subseteq
L^1(\mu)$. The definition of this $\pi$-product and its norm  is similar to the one of ``product" that we give here. However,  note that in  \cite{delsan} the product norm in defined for infinite sums, although in other versions only finite decompositions are considered (see \cite{schepmultip}). Completeness of the resulting product space is the advantage of considering infinite sums; this is not relevant here, since our product formula concerns the dual of the space, that is the same for the normed space and for its completion. In order to avoid confusion, we write $X(\mu) \pi_0 Y(\mu)$ for the normed space of linear combinations of single product of functions with the norm computed by means of finite sums.

 In Proposition 2.2(ii) of \cite{delsan}, the following product duality formula is proved.
$$
(X(\mu) \pi Y(\mu))'=X(\mu)^{Y(\mu)'}.
$$

 Consider the bilinear map $c:X(\mu) \times Y(\mu) \to
X(\mu) \pi_0 Y(\mu)$ given by the pointwise product $c(f,g):= f g$. Let us define $X= \mathbb R$ and $\mathcal V=
(X(\mu)\pi_0 Y(\mu))^{L^1(\mu)}$. A simple computation just taking into account the definition of the elements of each space shows that it coincides with $(X(\mu)\pi Y(\mu))'$, the K\"othe dual of the true $\pi$-product space. We take $G= X(\mu) \pi_0 Y(\mu) $ that coincides with $X(\mu) {\otimes}_{\pi_c} Y(\mu)$ by the construction of this space.  Then Theorem \ref{prop1} proves the product duality formula given above,
$$
(X(\mu) \pi Y(\mu))'=(X(\mu) {\otimes}_{\pi_c} Y(\mu))^{\mathcal V} =X(\mu)^{{\mathcal V} \odot c}=
X(\mu)^{Y(\mu)'}.
$$

\subsection{Multipliers on Banach spaces of analytic functions}

Let us show now other application of the duality formula given in a completely different context. We follow the ideas published in \cite{blaspav} (see also the references therein). Let $\mathcal{S}$ denote the space of all formal series $f= \sum_{j=0}^\infty \hat{f}(j) z^j$ with complex valued coefficients. We endow this space with the topology given by the seminorms $p_j(f):=\hat{f}(j)$, $j=1,...,\infty$. If $g$ is other series like this, the Hadamard product $\ast$ is defined as
$$
f \ast g= \sum_{j=0}^\infty \hat{f}(j) \hat{g}(j) z^j.
$$
Take a pair of spaces $X$ and $Y$ of analytic functions on the unit disk $\mathbb D$ such that each of them contains the polynomials and $X,Y \subseteq \mathcal{S}$ with continuous inclusions; following the definition given in the paper quoted, we say that the spaces are $\mathcal{S}$-admissible. We consider now two relevant spaces.

\begin{itemize}

\item The space that plays the role of product; as in the previous case and following our construction, we use finite sums in the following definitions of the space and the norm. In the original paper, infinite sums are considered, and then the corresponding space is complete.
We consider the space $X \bar\otimes Y$ defined by functions satisfying that they can be written as $f = \sum_{n=0}^k g_n \ast h_n,$ with $g_n \in X,$ $h_n \in Y$ and $\sum_{n=0}^k \|g_n\| \|h_n\| < \infty;$ the infimum in this expression for all decompositions of $f$ gives the natural norm for the space. We use the symbol $\bar\otimes$ instead of $\otimes$ used by the authors by the aim of clarity, since in our case only finite sums are considered and so we cannot expect completeness.  We show in what follows that the formula also works for finite sums decompositions, as is written when the product duality formula is involved.

\item The space that plays the role of a multiplication operators space. A series
$\lambda \in \mathcal S$ is said to be a (coefficient) \textit{multiplier} from $X$ to $Y$
 if $\lambda \ast f \in  Y$ for
each $f \in X$.
We denote the set of all multipliers from $X$ to $Y$ by $(X, Y )$ and define
$$
\|\lambda\|_{(X,Y)}:=
\sup \{ \| \lambda \ast f\|_Y: \|f\|_X \le 1 \}.
$$

\end{itemize}

The following result is proved in \cite{blaspav} (Theorem 2.3). Let $X, Y, Z$ be $\mathcal{S}$-admissible Banach spaces. Then
$$
(X \otimes Y,Z) = (X, (Y,Z)).
$$

It can be understood  again as a particular case of our product duality formula. The bilinear map $\ast:X \times Y \to X \bar\otimes Y$ plays the role of $c$, $G:=X \bar\otimes Y$ and  $\mathcal V:= (X \bar\otimes Y,Z)$, the space of multipliers from the product to $Z$. Thus, $(X {\otimes}_{\pi_c} Y)^\mathcal{V}=(X \bar\otimes Y,Z)$ and $X^{\mathcal V \odot c} :=(X,(Y,Z))$. Writing our product duality formula, we obtain
$$
(X \bar\otimes Y,Z)=(X {\otimes}_{\pi_c} Y)^\mathcal{V}=X^{\mathcal V \odot c} =(X,(Y,Z)),
$$
a version of the result that was proved in \cite{blaspav} for the norm defined for infinite sums.

\subsection{The product duality formula for the case of spaces of molecules associated to the linearization of Lipschitz bi-forms.}

Let us show the product duality formula for the case of the spaces of molecules, that appear in the standard techniques for linearizing Lipschitz operators. An operator $T$ from a metric space $(A,d_A)$ on a Banach space $Z$ is said to be Lipschitz if there is a constant $K >0$ such that for any pair of points $x_1, x_2 \in A$, we have
$$
\| T(x^1)-T(x^2)\| \le K d(x^1,x^2).
$$
The additional requirement $T(0)=0$ for a distinguished point $0 \in A$  is also assumed.
The space of molecules $\AE_A$ for the metric space $(A,d_A)$ is given by  the linear span of all the functions $A \to \mathbb R$ that can be written as differences of characteristic functions of each point, i.e.
$$
m_{x^1,x^2}(w):= \chi_{\{x^1\}}(w) - \chi_{\{x^2\}}(w), \quad w,x^1, x^2 \in A.
$$
The norm for this space is given by the formula
$$
L_A(z):= \inf \sum_{i=1}^n |\lambda_i| d(x^1_i,x^2_i), \quad \lambda_i \in \mathbb R,
$$
where the infimum is computed over all representations of $z$ as
$$
z=\sum_{i=1}^n \lambda_i (\chi_{\{x^1_i\}}(w) - \chi_{\{x^2_i\}}(w)).
$$
 A Lipschitz map $T:A \to Z$ always satisfies  a factorization scheme through the space of molecules as
$$
\xymatrix{
A \ar[rr]^{T} \ar@{->}[dr]_{j} & & Z. \\
& \AE_A \ar[ur]_{T_L} & }
$$
where $j$ is the Lipschitz isometry $j(x):=m_{x,0}$ for $x \in A$ and $T_L$ is the linearization of $T$ through the space of molecules.

Let us consider the notion of Lipschitz bi-form defined as follows. If $(A,d_A)$ and $(D,d_D)$ are metric spaces, we say that  a map $T:A \times D \to \mathbb R$ is a Lipschitz bi-form  if there is a constant $K >0$ such that for each $x^1,x^2 \in A$ and $y^1, y^2 \in D$,
$$
| T(x^1,y^1)-T(x^1,y^2) - T(x^2,y^1)+T(x^2,y^2) | \le K d_A(x^1,x^2) \cdot d_D(y^1,y^2).
$$
Additionally, $T(x,0)=T(0,y)=0$ for all $x \in A$ and $y \in D.$
If $(m_{x^1,x^2},m_{y^1,y^2}) \in \AE_A \times \AE_D$, we define the bilinearization  $T_B$ of $T$ as
 $$
T_B(m_{x^1,x^2},m_{y^1,y^2}) := T(x^1,y^1)-T(x^1,y^2) - T(x^2,y^1)+T(x^2,y^2),
$$
in  the way that alllows to $T$ to be factored through $T_B$ in the natural manner,
$$
\xymatrix{
A \times D \ar[rr]^{T} \ar@{->}[dr]_{j \times j} & & \mathbb R. \\
& \AE_A \times \AE_D \ar[ur]_{T_B} & }
$$
 We write $\mathcal B(\AE_A \times \AE_D)$ for the space of all the Lipschitz bi-forms, which can be bilinearized in this way.

Since $T_B$ can also be factored through the tensor product $\AE_A \otimes_\pi \AE_D$, the following construction makes sense. Take $G:=\AE_A \odot_\pi \AE_D$, the space defined by functions $u:A \times D \to \mathbb R$ that are linear combinations of pointwise products of functions of $\AE_A$ and $\AE_D$. Define a norm for this space as 
$$
\|u\|_\odot:= \inf \big\{ \sum_{i=1}^n \|f_i\|_{\AE_A} \|g_i\|_{\AE_D} : {u= \sum_{i=1}^n f_i \cdot g_i} \big\},
$$
where the infimum is computed over all suitable decompositions of $u$ as the one written in the formula. Clearly, an isometric isomorphism can be defined between this space and the tensor product $\AE_A \otimes_\pi \AE_D$.
Note that this norm can be computed also with the formula 
$$
\pi_c(u):=
\inf \big\{\sum_{i=1}^n |\lambda_i| d_A(x_i^1,x_i^2) d_D(y_i^1,y_i^2): \, {u= \sum_{i=1}^n \lambda_i  \, m_{x_i^1,x_i^2} \cdot m_{y_i^1,y_i^2}} \big\}.
$$

Take  the Banach space $X$ in our abstract construction of the duality as $X:= \mathbb R$, and consider the  bilinear map $c:\AE_A \times \AE_D \to \AE_A \odot_\pi \AE_D$ given by the pointwise product $c(f,g):=f  g$. Take also $\mathcal V= (\AE_A \otimes_\pi \AE_D)^*.$ Then, the $\pi_c$ norm is  the usual projective norm,  that coincides with $\| \cdot \|_\odot$.
Therefore, the product space is 
$$
\AE_A {\otimes}_{\pi_c} \AE_D := {\Big\{ u= \sum_{i=1}^n \lambda_i  m_{x_i^1,x_i^2} \cdot m_{y_i^1,y_i^2}:  \, \lambda_i \in \mathbb R, \, \ x_i^1,x_i^2 \in A, \, y_i^1,y_i^2 \in D \Big\} }.
$$
 Using
Example  \ref{ex4.2}  for the duality of the projective tensor product $(\AE_A \otimes_\pi \AE_D)^*= (\AE_A)^{\AE_D^*}$, we obtain that this class $\mathcal B(\AE_A \times \AE_D)$ of maps from the Cartesian product of metric spaces can be identified with the real maps that factor through  $\AE_A {\otimes}_{\pi_c} \AE_D.$ The product duality gives the equality with all the linear and continuous maps from $\AE_A$ to $\AE_D^*,$ that is,
$$
\mathcal B(\AE_A \times \AE_D) = (\AE_A)^{\AE_D^*}.
$$

\subsection{The product induced by the integration map in spaces of $p$-integrable functions with respect to a vector measure.}

Let us give first some necessary definitions on vector measure integration and the corresponding spaces of functions. If $(\Omega, \Sigma)$ is a measurable space,
let $m: \Sigma \to X$ be a Banach space valued countably additive
vector measure. Its semivariation is defined by
$\|m\|(A):=\sup_{x^* \in B_{Y^*}} | \langle m, x^* \rangle|(A)$,
$A \in \Sigma$, where $\langle m,x^* \rangle$ is the scalar 
measure given by $\langle m,x^* \rangle(A):= \langle m(A),x^*
\rangle$. Then  there exists $x^*\in
X^*$ such that  $m(A) = 0$ whenever $|\langle
m(A),x^*\rangle|= 0$, which implies that is equivalent to $m$ (same null sets). Such a measure  $\langle m, x^*\rangle$ defined by the composition of $m$ with a functional of $X^*$ is
called a Rybakov measure for $m$; there always exists at least one (\cite[Ch.IX]{dies}).
If $1 \le p < \infty$, a (scalar) measurable
function $f$ is said to be $p$-integrable with respect to $m$ if
$|f|^p$ is integrable with respect to all measures $|\langle m,
x^* \rangle|$ and for each $A \in \Sigma$ there exists an element
$\int_A |f|^p dm \in X$ such that $\langle \int_A |f|^p dm, x^*
\rangle =\int_A |f|^p d \langle m, x^* \rangle$, $x^* \in X^*$
(see \cite[Ch.3]{libro}).
The space $L^p(m)$ is defined by all the
equivalence classes (with respect to any Rybakov measure) of measurable real functions defined
on $\Omega$ that are $p$-integrable with respect to $m$. If $p=\infty$, the space $L^\infty(m)$ is defined as the space of bounded $1$-integrable functions with respect to $m$, that coincides with $L^\infty(\mu)$ for any Rybakov measure $\mu$ for $m$; the natural $L^\infty$-norm is considered for the space. If $1 \le p < \infty$, $L^p(m)$ is a $p$-convex order continuous 
Banach function space over any fixed Rybakov measure for $m$ when the
a.e. order and the norm 
$$
\|f\|_{L^p(m)}:= \Big( \sup_{x^* \in B_{X^*}} \int_\Omega |f|^p d
|\langle m, x^* \rangle| \Big)^{1/p}, \quad f \in L^p(m),
$$
are considered (see \cite[Proposition 5]{illi}, 
\cite{fernandez-mayoral-naranjo-saez-sanchez perez} and
\cite[Ch.3]{libro}). It must be pointed out that  $fg\in L^1(m)$ for any $f\in L^p(m)$ and $g\in L^{p'}(m)$, $1=1/p+1/p'$, and
for each $f \in L^p(m)$
$$
\|f\|_{L^p(m)}= \sup_{g \in B_{L_{p'}(m)}} \| \int_\Omega fg \,
dm\|.
$$
Thus, $L^p(m)$ is defined as the $1/p$-th power of $L^1(m)$.

Consider the bilinear map $c: L^p(m) \times
L^{p'}(m) \to L^1(m)$ given by the pointwise product  $c(f,g):= f g$ and the class
$\mathcal V$ of operators $L^1(m) \to X$ given by $T(h):= \int h
h_0 \, dm$ for $h_0 \in L^\infty(m)$. Then it can be easily seen
that
$$
\pi_c(h)= \|h\|_{L^1(m)}, \quad h \in L^1(m),
$$
as in the case of $p$-th powers of Banach function spaces, 
and $E {\otimes}_{\pi_c} F =L^1(m)$. We have that
$$
(E {\otimes}_{\pi_c} F)^\mathcal{V}  =
$$
$$
\big\{v_{I_{m_{h_0}}} \, | \, h_0 \in L^\infty(m), \,
v_{I_{m_{h_0}}}(f,g)= \int h_0fg \, dm \in X, \, f \in L^p(m), \,
g \in L^{p'}(m)  \big\}
$$
and
$$
\|v_{I_{m_{h_0}}}\|= \sup_{f \in B_{L^p(m)}, \, g \in
B_{L^{p'}(m)}} \| \int h_0 f g \, d m \|= \|h_0\|_{L^\infty(m)}.
$$
 On
the other hand,
$$
L^p(m)^{{{\mathcal V} \odot c}} =
$$
$$
\big\{ S_{I_{m_{h_0}}}: L^p(m) \to \mathcal{L}(L^{p'}(m),X) \,|
\, S_{I_{m_{h_0}}}(f)(\cdot)= \int h_0 f (\cdot) \, dm, \, h_0 \in
L^\infty(m) \big\}
$$
and
$$
\|S_{I_{m_{h_0}}} \|= \sup_{f \in B_{L^p(m)}} \big( \sup_{g \in
B_{L^{p'}(m)}} \| \int h_0 f g \, d m \|
\Big)=\|v_{I_{m_{h_0}}}\|= \|h_0\|_{L^\infty(m)}.
$$
Consequently
$$
\big( L^p(m) {\otimes}_{\pi_c} L^{p'}(m)
\big)^{\mathcal{V}}=
L^p(m)^{{{\mathcal V} \odot c}}=L^\infty(m).
$$
This formula concerns the vector measure duality between spaces of $p$-integrable functions, that was first studied in \cite{illi,proc} (see also \cite{irene,ferod,RuSaTMNA} and the references therein). Roughly speaking, in particular it asserts that the ``vector dual" space of $L^1(m)$ of the vector measure $m$ ---i.e, the dual space that appears when the duality  is defined by the bilinear operator induced by the integration map---, is always $L^\infty(m)$, since the usual dual space of $L^1(m)$ does not coincide  with this space in the general case.

\subsection{An application to spaces of integrable functions with respect to a vector measure.}

Let
$m:\Sigma \to Z$ be a Banach space valued countably additive
vector measure and let $1<p<\infty$. Consider the space of
$p$-integrable functions with respect to a vector measure
$L^p(m)$ and take it as the space $E$ of our general setting. Consider for each element $z^* \in Z^*$ the scalar measure
$\langle m, z^* \rangle(A):=\langle m(A), z^* \rangle$, $A \in
\Sigma$. Fix a Rybakov measure $\mu=|\langle m, z_0^*\rangle|$ and
consider the linear space of the Radon-Nikodym derivatives
$$
\mathcal{RN}(m) := \big\{ h= \frac{d \langle m, z^* \rangle}{d \mu},
\, z^* \in Z^* \big\} \subseteq (L^1(m))'
$$
endowed with the norm of $(L^1(m))'$; take it as $F$.

In recent years, some effort has been made in order to find how the set $\mathcal{RN}(m)$ can be used to find a description of the dual space of the spaces $L^p(m)$. In \cite{irene,proc}, such a description has been found, although some requirements are needed; the main application of this representation is to give a useful description of the weak topology in this space, what was finally done in \cite{ferod} (see also \cite{RuSaTMNA}).

In what follows we show how we can use the product construction to clarify this problem. Let $G:=  (L^{p'}(m))'$ and consider the Cartesian product $L^p(m) \times
\mathcal{RN}(m)$ and the bilinear map
$$
c:L^p(m) \times \mathcal{RN}(m) \to (L^{p'}(m))'
$$
given by the pointwise product $c(f,h)= f  h \in
(L^{p'}(m))'$. It is well defined, since for all $g \in L^{p'}(m)$
and $f \in L^p(m)$, $fg \in L^1(m)$ and so
$$
\int f h g \, d \mu= \int (fg) h \, d\mu \le \|fg\|_{L^1(m)}
\|h\|_{(L^1(m))'}
$$
$$
\le \|f\|_{L^p(m)} \|g\|_{L^{p'}(m)} \|h\|_{(L^1(m))'}, \quad h
\in \mathcal{RN}(m).
$$

For all the elements $w$ in the linear space generated by
$c(L^p(m) \times \mathcal{RN}(m))$, define
$$
{\pi_c}(w) := \inf \sum_{i=1}^n \|f_i\|_{L^p(m)}
\|h_i\|_{(L^1(m))'},
$$
where the infimum is computed over all decompositions of $w$ as
$\sum_{i=1}^n f_i h_i$, $f_i \in L^p(m)$ and $h_i \in  \mathcal{RN}(m)) \subseteq  (L^1(m))'$. Take $G$ as the space of all these $w$ with the norm $\pi_c$.

Consider now the space $L^{p'}(m)$ and take $\mathcal{V}$ as the space of real functionals from $(L^{p'}(m))'$ defined by the elements of $L^{p'}(m)$; we have taken $X:= \mathbb R.$

Let us write what is obtained by applying the product duality formula.
$$
\big( L^p(m) {\otimes}_{\pi_c} \mathcal{RN}(m)
\big)^{\mathcal{V}}
=
L^p(m)^{{{\mathcal V} \odot c}}
$$
$$
=\big\{ S_h: L^p(m) \to \mathcal{RM}(m)^* \,| \, h \in L^{p'}(m),
\, S_h(f)(\cdot):=\int hf \,\cdot \, d\mu \, \big\}
$$
$$
= L^{p'}(m) \subseteq   L^p(m)^{\mathcal{RN}(m)^*},
$$
where $L^{p'}(m)$ is endowed with its own norm. Therefore, by using the basic properties of the norm of the spaces $L^p(m)$ and the corresponding spaces of multiplication operators that can be found in \cite[Ch.3]{libro} it can be easily seen that  the first equality in
$$
L^{p'}(m) = \big( L^p(m) {\otimes}_{\pi_c} \mathcal{RN}(m)
\big)^{\mathcal{V}}  \subseteq \big( L^p(m) {\otimes}_{\pi_c} \mathcal{RN}(m)
\big)^{\mathbb{R}}
$$
holds isometrically. In other words, $L^p(m) {\otimes}_{\pi_c} \mathcal{RN}(m)$ is a sort of predual space of $L^{p'}(m)$, in the sense that this space can be identified with the linear functionals $\phi_h$ from $L^p(m) {\otimes}_{\pi_c} \mathcal{RN}(m)$ that factors as
$$
\xymatrix{ L^p(m) {\otimes}_{\pi_c} \mathcal{RN}(m)
\ar[rr]^{\phi_h} \ar@{->}[dr]_{i} & & \mathbb R. \\
&  (L^{p'}(m))' \ar[ur]_{\int h \cdot d \mu} & }
$$
for $h \in L^{p'}(m).$

\vspace{1cm}
\textit{The author wants to thank Professor O. Blasco by some comments that allowed to develop the example that is shown in Section 3.3, and to the anonymous referee for several suggestions, including Example \ref{exnuv} and the title of the paper.}

\bibliographystyle{amsplain}

\end{document}